\documentclass[12pt]{article}

\evensidemargin0cm \oddsidemargin0cm \textwidth16cm
\textheight23cm \topmargin-2cm

\usepackage{graphicx}

\usepackage{bbm}

\usepackage{amsmath}
\usepackage{amsthm}
\usepackage{amsfonts}
\usepackage{amssymb}
\usepackage{color}
\usepackage{comment}

\newcommand{\me}{\mathbb{E}}
\newcommand{\mn}{\mathbb{N}}
\newcommand{\mr}{\mathbb{R}}
\DeclareMathOperator{\1}{\mathbbm{1}}

\newcommand{\eee}{{\rm e}}
\newcommand{\dd}{{\rm d}}

\newcommand{\mmp}{\mathbb{P}}

\newtheorem{thm}{Theorem}[section]
\newtheorem{lemma}[thm]{Lemma}

\theoremstyle{definition}

\theoremstyle{remark}

\begin{document}
\title{A law of the iterated logarithm for iterated random walks, with application to random recursive trees}
\date{}
\author{Alexander Iksanov\footnote{Faculty of Computer Science and Cybernetics, Taras Shevchenko National University of Kyiv, Ukraine; e-mail address:
iksan@univ.kiev.ua} \ \ Zakhar Kabluchko\footnote{Institut f\"{u}r Mathematische Statistik, Westf\"{a}lische Wilhelms-Universit\"{a}t M\"{u}nster,
48149 M\"{u}nster, Germany; e-mail address: zakhar.kabluchko@uni-muenster.de} \ \ and \ \ Valeriya Kotelnikova\footnote{Faculty of Computer Science and Cybernetics, Taras Shevchenko National University of Kyiv, Ukraine; e-mail address:
valeria.kotelnikova@unicyb.kiev.ua}}

\maketitle

\begin{abstract}
Consider a Crump-Mode-Jagers process generated by an increasing random walk whose increments have finite second moment. Let $Y_k(t)$ be the number of individuals in generation $k\in \mathbb N$ born in the time interval $[0,t]$.
We prove a law of the iterated logarithm for $Y_k(t)$ with fixed $k$, as $t\to +\infty$.  As a consequence, we derive a law of the iterated logarithm for the number of vertices at a fixed level $k$ in a random recursive tree, as the number of vertices goes to $\infty$.
\end{abstract}

\section{A law of the iterated logarithm for iterated random walks}\label{sec:main}

Let $\xi_1$, $\xi_2,\ldots$ be independent copies of an almost surely (a.s.) positive random variable $\xi$. Denote by $S:=(S_n)_{n\in\mn}$ the {\it standard random walk} with increments $\xi_n$ for $n\in\mn$, that is,  $S_n:=\xi_1+\ldots+\xi_n$ for $n\in\mn$. The corresponding {\it renewal process} $(Y(t))_{t\geq 0}$ is defined by
$$Y(t):=\sum_{n\ge 1} \1_{\{S_n\le t\}}, \quad t\geq 0.$$ Put $V(t):=\me Y(t)$ for $t\geq 0$. The function $V$ is called {\it renewal function}.

Now we recall the construction of a general branching process (a.k.a.\ Crump-Mode-Jagers process) generated by $S$. There is a population of individuals initiated at time $0$ by one
individual, the ancestor. An individual born at time $t\geq 0$ produces offspring whose birth times have the same
distribution as $(t + S_n)_{n\in\mn}$. All individuals act independently of each other. For $k\in\mn$, an individual resides in the $k$th
generation if it has exactly $k$ ancestors. For $k\in\mn$ and $t\geq 0$, denote by $S^{(k)}$ the collection of the birth times in
the $k$th generation and by $Y_k(t)$ the number of the $k$th generation individuals with birth times $\leq t$. Put $V_k(t):=\me Y_k(t)$. Plainly, $Y_1(t)=Y(t)$ and $V_1(t)=V(t)$ for $t\geq 0$. Following \cite{Bohun etal:2022, Iksanov+Rashytov+Samoilenko:2023} we call the sequence $(S^{(k)})_{k\geq 2}$ an {\it iterated standard random walk}.

Let $(\xi_1,\eta_1)$, $(\xi_2,\eta_2),\ldots$ be independent copies of an $\mr^2$-valued random vector $(\xi, \eta)$ with positive arbitrarily dependent components. Put
\begin{equation*}
T_n:= S_{n-1}+ \eta_n,\quad n\in\mn.
\end{equation*}
The sequence $T:=(T_n)_{n\in\mn}$ is called {\it (globally) perturbed random walk}. A counterpart of $(S^{(k)})_{k\geq 2}$, obtained by replacing in the aforementioned construction $S$ with $T$, is called an {\it iterated perturbed random walk}.

Now we review some previous work on iterated random walks and more general processes. Denote by $D$ the Skorokhod space of c\`{a}dl\`{a}g functions on $[0,\infty)$.
\begin{itemize}
\item Iterated standard random walks.

\noindent - Theorem 1.3 in \cite{Iksanov+Kabluchko:2018a} is a functional central limit theorem (FCLT) for $(Y_k)_{k\in\mn}$, properly scaled, normalized and centered, on $D^\mn$ equipped with the product $J_1$-topology, under the assumption $\me\xi^2<\infty$.

\noindent - Theorem 2.6 in \cite{Iksanov+Marynych+Rashytov:2022} derives the asymptotics of ${\rm Var}\,Y_k(t)$ as $t\to\infty$ when $k\in\mn$ is fixed under the assumptions that $\me\xi^2<\infty$ and that the distribution of $\xi$ is nonlattice (see Section \ref{sect:aux} for the definition).

\noindent - Theorems 2.1 and 2.2 in \cite{Iksanov+Kabluchko:2018b} prove weak convergence of finite-dimensional distributions $(Y_{\lfloor k(t)u\rfloor}(t))_{u>0}$, properly normalized and centered, under the assumption that the distribution of $\xi$ is exponential. Here, $k$ is a positive function satisfying $k(t)\to +\infty$ and $k(t)=o(t)$ as $t\to\infty$.

\item Iterated perturbed random walks.

\noindent -Theorem 2.8 in \cite{Iksanov+Rashytov+Samoilenko:2023} is a FCLT for a counterpart of $(Y_k)_{k\in\mn}$, properly scaled, normalized and centered, on $D^\mn$ equipped with the product $J_1$-topology, under the assumptions $\me\xi^2<\infty$ and $\me \eta^a<\infty$ for some $a>0$. Also, this paper proves the elementary renewal theorem and its refinement, Blackwell's theorem and the key renewal theorem for a counterpart of $V_k$ with $k\in\mn$ fixed.

\noindent - The paper \cite{Bohun etal:2022} proves the elementary renewal theorem, Blackwell's theorem and the key renewal theorem for a counterpart of $V_{k(t)}(t)$ under various assumptions imposed on the distributions of $\xi$ and $\eta$. Here, $k$ is an integer-valued function satisfying $k(t)\to +\infty$ and $k(t)=o(t^{2/3})$ as $t\to\infty$. The most interesting observation of the cited paper is that, under the assumptions that $\me\xi^3<\infty$, that $\me \eta^2<\infty$ and that the distribution of $\xi$ is spread out, the asymptotics of $V_{k(t)}(t)$ exhibits a phase transition at generations $k(t)$ of order $t^{1/2}$.

\noindent - Proposition 3.1 and Theorems 3.2 and 3.3 in \cite{Buraczewski+Dovgay+Iksanov:2020} prove weak convergence of the finite-dimensional distributions for a counterpart of $(Y_{\lfloor k(t)u\rfloor}(t))_{u>0}$, properly normalized and centered, under the assumptions $\me \xi^2<\infty$, $\me \eta<\infty$ and that the distribution of $\xi$ is nonlattice. Here, $k$ is a positive function satisfying $k(t)\to +\infty$ and $k(t)=o(t^{1/3})$ as $t\to\infty$.

\noindent - Theorems 3.1 and 3.2 in \cite{Iksanov+Marynych+Samoilenko:2023} provide an improvement over the aforementioned result from \cite{Buraczewski+Dovgay+Iksanov:2020}, in which the distribution of $\xi$ is not required to be nonlattice and, more importantly, the condition $k(t)=o(t^{1/3})$ as $t\to\infty$ is replaced with $k(t)=o(t^{1/2})$.

\noindent - Theorem 1 in \cite{Iksanov+Marynych+Rashytov:2022} proves weak convergence of the finite-dimensional distributions for a counterpart of $(Y_{\lfloor k(t)u\rfloor}(t))_{u>0}$, properly normalized and centered, under the assumptions that $\me\xi^2=\infty$, that the distribution of $\xi$ belongs to the domain of attraction of an $\alpha$-stable distribution, $\alpha\in (1,2]$ and that $\me \min (\eta, t)=O(t^{2-\gamma})$ as $t\to\infty$ for some $\gamma\in (1,2)$ specified in the paper. Here, $k$ is a positive function satisfying $k(t)\to +\infty$ and $k(t)=o(t^{(\gamma-1)/2})$ as $t\to\infty$.

\item More general iterated sequences.

\noindent - Theorem 3.2 in \cite{Gnedin+Iksanov:2020} is a FCLT for a counterpart of $(Y_k)_{k\in\mn}$, properly scaled, normalized and centered, on $D^\mn$ equipped with the product $J_1$-topology, under the assumption that an appropriate FCLT holds for $Y_1$ and some further assumptions.

\end{itemize}

For a family $(x_t)$ of real numbers we write $C((x_t))$ for the set of its limit points. Recall that $0!=1$. Now we state a law of the iterated logarithm for $Y_k$ for fixed $k\in\mn$.
\begin{thm}\label{main}
Assume that $\sigma^2:={\rm Var}\,\xi\in (0, \infty)$. Then, for each fixed $k\in\mn$,
$$C\bigg(\bigg(\frac{a_k \big(Y_k(t)-t^k/(k!
\mu^k)\big)}{(2t^{2k-1}\log\log t)^{1/2}}: t>\eee\bigg)\bigg)=[-1,1]\quad\text{{\rm a.s.}},$$ where $$a_k:=\sigma^{-1} \mu^{k+1/2}(k-1)!(2k-1)^{1/2}$$ and $\mu:=\me\xi<\infty$. In
particular,
$${\lim\sup}_{t\to\infty}\frac{a_k \big(Y_k(t)-t^k/(k!
\mu^k)\big)}{(2t^{2k-1}\log\log t)^{1/2}}=1\quad\text{{\rm a.s.}}$$ and $${\lim\inf}_{t\to\infty}\frac{a_k \big(Y_k(t)-t^k/(k!
\mu^k)\big)}{(2t^{2k-1}\log\log t)^{1/2}}=-1\quad\text{{\rm a.s.}}$$
The centering $t^k/(k!\mu^k)$ can be replaced with $\me Y_k(t)$ everywhere.
\end{thm}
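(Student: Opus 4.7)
The plan is to prove the theorem by induction on $k$, taking Strassen's functional law of the iterated logarithm (FLIL) for the ordinary renewal process $Y_1$ as the input and propagating it through the branching recursion. For $k=1$, the classical FLIL for renewal processes (a consequence of Strassen's FLIL for $S_n$ via the duality $\{Y_1(t)\ge n\}=\{S_n\le t\}$) says that the family $W_t(u):=(Y_1(tu)-tu/\mu)/\sqrt{2t\log\log t}$, $u\in[0,1]$, is almost surely relatively compact in $C[0,1]$ with set of limit points $-\sigma\mu^{-3/2}\mathcal{K}$, where $\mathcal{K}=\{f\in H^1[0,1]\colon f(0)=0,\ \int_0^1 (f')^2\,\dd s\le 1\}$ is the Strassen ball. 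Evaluating at $u=1$ yields the theorem for $k=1$, since $a_1=\sigma^{-1}\mu^{3/2}$.

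For $k\ge 2$, I exploit the branching decomposition
\begin{equation*}
Y_k(t)=\sum_{n\ge 1} Y_{k-1}^{(n)}(t-S_n)\,\1_{\{S_n\le t\}},
\end{equation*}
where $(Y_{k-1}^{(n)})_{n\ge 1}$ are i.i.d.\ copies of $Y_{k-1}$ attached to the first-generation individuals and independent of $S$. Subtracting means yields
\begin{equation*}
Y_k(t)-V_k(t)=\int_0^t V_{k-1}(t-s)\,\dd\bigl(Y_1(s)-V_1(s)\bigr)+R_k(t),
\end{equation*}
where $R_k(t):=\sum_{n\ge 1}(Y_{k-1}^{(n)}(t-S_n)-V_{k-1}(t-S_n))\,\1_{\{S_n\le t\}}$ is conditionally mean-zero given the first generation. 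Inserting the sharp asymptotic $V_{k-1}(s)=s^{k-1}/((k-1)!\mu^{k-1})+O(s^{k-2})$, integrating by parts, and rescaling $s=tu$ recast the leading part of the first term as
\begin{equation*}
\frac{t^{k-1/2}\sqrt{2\log\log t}}{(k-2)!\,\mu^{k-1}}\,\Phi(W_t),\qquad \Phi(f):=\int_0^1 (1-u)^{k-2}f(u)\,\dd u.
\end{equation*}
Because $\Phi$ is a continuous linear functional on $C[0,1]$ and $\mathcal{K}$ is compact, convex and symmetric, $\Phi(\mathcal{K})=[-M,M]$; a Cauchy--Schwarz computation in the Cameron--Martin norm gives $M=\frac{1}{k-1}\bigl(\int_0^1(1-s)^{2k-2}\,\dd s\bigr)^{1/2}=\frac{1}{(k-1)\sqrt{2k-1}}$, and combining this with the factor $\sigma\mu^{-3/2}$ from the limit set $-\sigma\mu^{-3/2}\mathcal{K}$ produces precisely $1/a_k$. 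Hence the set of limit points of $a_k(Y_k(t)-V_k(t))/\sqrt{2t^{2k-1}\log\log t}$ equals $[-1,1]$.

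The main technical obstacle is to verify that $R_k$ and the term generated by the $O(s^{k-2})$ correction in $V_{k-1}$ are both $o(\sqrt{t^{2k-1}\log\log t})$ almost surely. Using the variance bound $\var Y_{k-1}(s)=O(s^{2k-3})$ (derivable inductively, or read off from Theorem~2.6 of \cite{Iksanov+Marynych+Rashytov:2022} in the nonlattice case) one obtains $\var R_k(t)=O(t^{2k-2})$, and similarly for the correction term. Chebyshev's inequality along a geometric subsequence $t_m=q^m$ combined with the Borel--Cantelli lemma yields negligibility along $(t_m)$, and the gaps $[t_m,t_{m+1}]$ are filled in by a maximal inequality applied to the natural martingale decomposition of $R_k$ in $t$. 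Finally, replacing the centering $V_k(t)$ by $t^k/(k!\mu^k)$ is harmless, because iterated renewal asymptotics give $V_k(t)-t^k/(k!\mu^k)=O(t^{k-1})=o(\sqrt{t^{2k-1}\log\log t})$.
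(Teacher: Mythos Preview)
Your treatment of the leading term is correct and is a genuine alternative to the paper's argument. Where the paper invokes a strong approximation (Lemma~\ref{chs}) to replace $Y_1-V_1$ by $\sigma\mu^{-3/2}W$ and then applies Lachal's LIL to the Gaussian integral $\int_0^t(t-x)^{k-1}\,\dd W(x)$, you instead feed Strassen's functional LIL for $(Y_1(tu)-tu/\mu)/\sqrt{2t\log\log t}$ through the continuous linear functional $\Phi(f)=\int_0^1(1-u)^{k-2}f(u)\,\dd u$ and read off $\Phi(\mathcal K)=[-M,M]$ with $M=((k-1)\sqrt{2k-1})^{-1}$. This is cleaner and avoids the citation to \cite{Lachal:1997}; conversely, the paper's route has the advantage that, once the strong approximation is in place, the correction term coming from $V_{k-1}(s)-s^{k-1}/((k-1)!\mu^{k-1})$ becomes the explicit Gaussian process $B_{2,k}$, for which exponential tail bounds are available.

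There is, however, a real gap in your handling of $R_k(t)$. You propose a geometric subsequence $t_m=q^m$ and then appeal to ``a maximal inequality applied to the natural martingale decomposition of $R_k$ in $t$'' to bridge the gaps $[t_m,t_{m+1}]$. But $R_k(t)=\sum_n(Y_{k-1}^{(n)}(t-S_n)-V_{k-1}(t-S_n))\1_{\{S_n\le t\}}$ carries no martingale structure in $t$: each summand $Y_{k-1}^{(n)}(s)-V_{k-1}(s)$ is \emph{not} a martingale in $s$ for a general renewal-type process, and no Doob-type inequality is available. With a geometric grid the gaps have length comparable to $t_m$, so the crude monotone sandwich $R_k(t)\le R_k(t_{m+1})+\bigl[\int V_{k-1}(t_{m+1}-x)\,\dd Y(x)-\int V_{k-1}(t_m-x)\,\dd Y(x)\bigr]$ yields an error of order $t_m^{k}$, which swamps the target scale $t_m^{k-1/2}$. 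The paper resolves this by choosing the \emph{polynomial} subsequence $t_n=n^{3/2}$, whose gaps are only $O(n^{1/2})=O(t_n^{1/3})$; then the monotone difference above is split into $X_{k,1}(n)+X_{k,2}(n)$, both controlled using the subadditivity bound $V_{k-1}(x+h)-V_{k-1}(x)\le (V(h)+1)(V(x+h))^{k-2}$ (Lemma~\ref{lem:vk}(d)) and the SLLN for $Y$. This choice of subsequence, together with the subadditivity lemma, is the missing idea.

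A secondary point: your claim that the $O(s^{k-2})$-correction term has variance $O(t^{2k-2})$ ``similarly'' to $R_k$ is not immediate, because $\int_0^t g(t-s)\,\dd(Y_1(s)-V_1(s))$ is an integral against a process without independent increments and $g=V_{k-1}-s^{k-1}/((k-1)!\mu^{k-1})$ is not monotone. The paper spends most of its effort on exactly this term (the process $B_{2,k}$), using Gaussian tail bounds and an increment decomposition over $[n,n+1]$; within your Strassen framework you would need a separate argument, e.g.\ bounding the $t$-dependent functionals uniformly over the Strassen ball.
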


\section{Application to random recursive trees}
In this section we state a law of the iterated logarithm for the profile of the random recursive tree (RRT) and prove it using Theorem~\ref{main} in the special case when the random variable $\xi$ has an exponential distribution of unit mean. For our purposes, the following continuous-time construction of the RRT is convenient (see, e.g., Example~6.1 in~\cite{holmgren+janson:2017}).  At time $0$, the RRT consists of $1$ vertex, the root, located at level $0$. This vertex generates offspring at arrival times of a unit intensity Poisson process. These offspring are located at level $1$. More generally, each vertex of the tree, immediately after its birth, starts to generate offspring at rate $1$, and all vertices act independently. If some vertex is located at level $k$, then its offspring appear at level $k+1$, so that the level of any vertex is its distance to the root. Clearly, one can identify the birth times of the vertices at level $k\in \mn$ with the process $S^{(k)}$, as defined in Section~\ref{sec:main}, with the random variable $\xi$ having an exponential distribution of unit mean. Let $\tau_1 < \tau_2 < \ldots $  be the birth times of the vertices of the RRT, excluding the root born at time $\tau_0=0$. For $n\in\mn$, at time $\tau_n$, the tree consists of $n+1$ vertices. For $k\in \mn$, let $X_n(k)= Y_k(\tau_n)$ be the number of vertices in this tree having distance $k$ to the root at time $\tau_n$. The function $k\mapsto X_n(k)$ is called the profile of the RRT. Its asymptotic behavior as $n\to\infty$ has been much studied. For example, a central limit theorem for $X_n(k)$ with fixed $k$ has been obtained in~\cite{Fuchs+Hwang+Neininger:2006}; see also~\cite{Iksanov+Kabluchko:2018a} for a functional version.
As a corollary of Theorem~\ref{main} we shall prove the following law of the iterated logarithm for $X_n(k)$.


\begin{thm}\label{RRT}
For each fixed $k\in \mn$,
$$
C\bigg(\bigg(\frac{(k-1)!(2k-1)^{1/2} \big(X_n(k)-(\log n)^k/k!\big)}{(2(\log n)^{2k-1}\log\log\log n)^{1/2}}: n > \eee^\eee \bigg)\bigg)
=
[-1,1]\quad\text{{\rm a.s.}}
$$
\end{thm}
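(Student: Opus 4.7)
The plan is to deduce Theorem~\ref{RRT} from Theorem~\ref{main}, specialized to $\xi \sim \text{Exp}(1)$ (so $\mu = \sigma = 1$ and $a_k = (k-1)!(2k-1)^{1/2}$), by evaluating the continuous-time process at the random times $t = \tau_n$ and controlling the errors introduced. The key auxiliary input is the asymptotics of the Yule process $N(t)$ that governs the total population of the RRT: the Doob martingale $N(t)\eee^{-t}$ converges a.s. to some $W \sim \text{Exp}(1)$ (in particular, $W > 0$ a.s.), and since $N(\tau_n) = n+1$,
\begin{equation*}
\tau_n = \log n - \log W + o(1) \quad \text{a.s.}
\end{equation*}

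Writing $\tilde W(t) := a_k(Y_k(t) - t^k/k!)/(2t^{2k-1}\log\log t)^{1/2}$, Theorem~\ref{main} gives $C((\tilde W(t): t > \eee)) = [-1,1]$ a.s. Let $W_n$ denote the quantity appearing inside $C(\cdot)$ in Theorem~\ref{RRT}. Using $X_n(k) = Y_k(\tau_n)$ and a Taylor expansion of $\tau_n^k$ around $(\log n)^k$, the discrepancy between $\tau_n^k/k!$ and $(\log n)^k/k!$ is of order $(\log n)^{k-1}$, which, divided by the normalization $(2(\log n)^{2k-1}\log\log\log n)^{1/2}$, is $o(1)$ a.s.; likewise $(\tau_n/\log n)^{2k-1} \to 1$ and $\log\log \tau_n / \log\log\log n \to 1$ a.s. Combining these observations,
\begin{equation*}
W_n = (1+o(1))\,\tilde W(\tau_n) + o(1) \quad \text{a.s.},
\end{equation*}
so the limit sets of $(W_n)$ and $(\tilde W(\tau_n))$ coincide.

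The remaining task, which I expect to be the main obstacle, is to show $C((\tilde W(\tau_n))) = [-1,1]$ a.s.\ --- that is, that no limit points are lost when discretizing along $\{\tau_n\}$. The crucial observation is that $Y_k$ is constant on each interval $[\tau_n, \tau_{n+1})$ (no birth of any kind occurs there, by the definition of $\tau_n$), while the gap $\tau_{n+1} - \tau_n$ is $\text{Exp}(n+1)$-distributed, so $\mmp(\tau_{n+1} - \tau_n > \varepsilon) = \eee^{-(n+1)\varepsilon}$ is summable and Borel--Cantelli yields $\tau_{n+1} - \tau_n \to 0$ a.s. A direct estimate based on the derivatives of $t \mapsto t^k$ and $t \mapsto (2t^{2k-1}\log\log t)^{1/2}$, combined with the upper bound on $|\tilde W(\tau_n)|$ supplied by Theorem~\ref{main} itself (needed when controlling the change in the denominator), should then give
\begin{equation*}
\sup_{t \in [\tau_n, \tau_{n+1})} |\tilde W(t) - \tilde W(\tau_n)| \to 0 \quad \text{a.s.}
\end{equation*}
Given any $c$ in the continuous-time limit set and any (random) sequence $t_j \to \infty$ with $\tilde W(t_j) \to c$, replacing each $t_j$ by $\tau_{n(t_j)}$, where $n(t) := \max\{m: \tau_m \le t\}$, then exhibits $c$ as a limit point of $(\tilde W(\tau_n))$, finishing the proof.
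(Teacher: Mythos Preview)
Your proposal is correct and follows essentially the same approach as the paper: specialize Theorem~\ref{main} to exponential $\xi$, use the Yule process asymptotics to relate $\tau_n$ and $\log n$, and exploit that $Y_k$ is constant on each $[\tau_n,\tau_{n+1})$. The paper streamlines your final step by working with $n(t)=\max\{m:\tau_m\le t\}$ from the outset and using the identity $Y_k(\tau_{n(t)})=Y_k(t)$ directly, so that the continuous-time limit set in~\eqref{eq:LIL_CMJ_exponential} transfers immediately to the sequence indexed by $n(t)$ (and hence to all $n$), bypassing the separate Borel--Cantelli and uniform-continuity argument you outline.
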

For $k=1$, the claim is known (see Theorem 3' in \cite{renyi:1962}) since the sequence $(X_n(1))_{n\in\mn}$ has the same joint distribution as $(B_{1} + \ldots + B_n)_{n\in \mn}$, where $B_1,B_2,\ldots$ are independent Bernoulli random variables with $\mmp\{B_k = 1\}  = 1/k$.
\begin{proof}[Proof of Theorem~\ref{RRT}]
Let $\xi$ be a random variable having an exponential distribution of unit mean. Then $a_k=(k-1)!(2k-1)^{1/2}$ since $\mu=\sigma^2=1$, and Theorem~\ref{main} takes the form
\begin{equation}\label{eq:LIL_CMJ_exponential}
C\bigg(\bigg(\frac{a_k\big(Y_k(t)-t^k/k!\big)}{(2 t^{2k-1}\log\log t)^{1/2}}: t>\eee \bigg)\bigg)
=
[-1,1]\quad\text{{\rm a.s.}}
\end{equation}
For $t\geq 0$, let $n(t)\in \{0,1,\ldots\}$ be the unique index with $\tau_{n(t)} \leq t < \tau_{n(t) +1}$. Then, $(n(t)+1)_{t\geq 0}$ is the Yule process for which it is known (see Theorems 1 and 2 on pp.~111-112 in \cite{athreya_ney_book}) that $\lim_{t\to\infty} \eee^{-t} n(t)= W$ a.s., where $W$ is a random variable satisfying $W>0$ a.s. It follows that $\lim_{t\to\infty}(\log n(t) - t)= \log W$ a.s.\ and $\lim_{t\to\infty} t^{-1}\log n(t)=1$ a.s. Consequently, $$
\frac{t^k - (\log n(t))^k}{t^{k-1}} =
(t -\log n(t))\cdot \frac{\sum_{j=0}^{k-1} t^{j} (\log n(t))^{k-1-j}}{t^{k-1}}
\overset{}{\underset{t\to\infty}\longrightarrow}
 - k\cdot \log W
\quad
\text{{\rm a.s.}}
$$
Note that $Y_k(\tau_{n(t)}) = Y_k(t)$. The identity
\begin{multline*}
\frac{Y_k(\tau_{n(t)})-(\log n(t))^k/k!}{(2(\log n(t))^{2k-1}\log\log\log n(t))^{1/2}}
=
\left(\frac{Y_k(t)-t^k/k!}{(2 t^{2k-1}\log\log t)^{1/2}}  +  \frac{t^k -(\log n(t))^k}{k!(2 t^{2k-1}\log\log t)^{1/2}}\right)
\\
\times
\frac{(2 t^{2k-1}\log\log t)^{1/2}}{(2(\log n(t))^{2k-1}\log\log\log n(t))^{1/2}},
\end{multline*}
in which
$$
\lim_{t\to\infty} \frac{(2 t^{2k-1}\log\log t)^{1/2}}{(2(\log n(t))^{2k-1}\log\log\log n(t))^{1/2}}
=1\quad\text{and}\quad \lim_{t\to\infty} \frac{t^k -(\log n(t))^k}{k!(2 t^{2k-1}\log\log t)^{1/2}}=0 
\quad
\text{{\rm a.s.}},
$$
combined with~\eqref{eq:LIL_CMJ_exponential} entails that
$$
C\bigg(\bigg(\frac{a_k \big(Y_k(\tau_{n(t)})-(\log n(t))^k/k!\big)}{(2(\log n(t))^{2k-1}\log\log\log n(t))^{1/2}}: t\ge \tau_{\lceil \eee^{\eee}\rceil}\bigg)\bigg)=[-1,1]\quad\text{{\rm a.s.}}
$$
It follows that
$$
C\bigg(\bigg(\frac{a_k\big(Y_k(\tau_n)-(\log n)^k/k!\big)}{(2(\log n)^{2k-1}\log\log\log n)^{1/2}}: n > \eee^\eee \bigg)\bigg)
=
[-1,1]\quad\text{{\rm a.s.}}
$$
This completes the proof of Theorem~\ref{RRT} since $X_n(k)= Y_k(\tau_n)$.
\end{proof}

\section{Auxiliary results}\label{sect:aux}

For the proof of Theorem~\ref{main} we shall need the following strong approximation result, which follows, for instance, from Theorem 12.13 on p.~227 in \cite{Kallenberg:1997}.
\begin{lemma}\label{chs}
Assume that $\sigma^2={\rm Var}\,\xi\in (0,\infty)$. Then there exists
a standard Brownian motion $W$ such that
$$\lim_{t\to\infty}\frac{\sup_{0\leq s\leq
t}\,\big|Y(s)-V(s)-\sigma\mu^{-3/2}W(s)\big|}{(t\log\log t)^{1/2}}=0\quad\text{{\rm
a.s.}},$$ where $\mu=\me\xi<\infty$.
\end{lemma}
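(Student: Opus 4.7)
The plan is to deduce Lemma~\ref{chs} from the classical Strassen strong invariance principle for the partial sums $S_n$ and then transport the Gaussian approximation to the renewal process $Y$ via the inversion relation $S_{Y(t)}\le t<S_{Y(t)+1}$.

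First, I would invoke the following consequence of $\me\xi^2<\infty$: on a suitable enlargement of the probability space there exists a standard Brownian motion $B$ such that
$$\sup_{0\le n\le N}\bigl|S_n-n\mu-\sigma B(n)\bigr|=o\bigl((N\log\log N)^{1/2}\bigr)\quad\text{a.s., as } N\to\infty.$$
Using the strong law $Y(t)/t\to 1/\mu$ a.s., I would then substitute $n=Y(s)$ and bound the overshoot $s-S_{Y(s)}\le \xi_{Y(s)+1}$ (negligible by Borel--Cantelli since $\me\xi^2<\infty$) to obtain
$$s=\mu\, Y(s)+\sigma B(Y(s))+o\bigl((s\log\log s)^{1/2}\bigr),$$
with an error that is uniform in $s\in[0,t]$; uniformity is extracted by working on a dyadic grid and exploiting the monotonicity of both $Y$ and $V$.

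The second step is to linearise and recenter. Solving for $Y(s)$ and then using L\'{e}vy's modulus of continuity for $B$ together with $Y(s)/s\to 1/\mu$ to replace $B(Y(s))$ by $B(s/\mu)$ at the cost of an extra $o((s\log\log s)^{1/2})$ term, I would obtain
$$Y(s)-s/\mu=-(\sigma/\mu)B(s/\mu)+o\bigl((s\log\log s)^{1/2}\bigr).$$
Setting $W(s):=-\mu^{1/2}B(s/\mu)$, which is a standard Brownian motion by Brownian scaling, this reads $Y(s)-s/\mu=\sigma\mu^{-3/2}W(s)+o((s\log\log s)^{1/2})$. Finally, under $\me\xi^2<\infty$ one has $V(s)-s/\mu=O(1)$ (a standard consequence of Lorden's inequality or the key renewal theorem), which is absorbed into the error term; taking the supremum over $s\in[0,t]$ and dividing by $(t\log\log t)^{1/2}$ yields the desired limit.

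The main obstacle is upgrading the substitution $n=Y(s)$ to a statement that is uniform in $s\in[0,t]$ with error rate $o((t\log\log t)^{1/2})$: $Y$ and $B$ are not jointly monotone, so the uniform bound cannot be read off from a single-scale estimate. The remedy --- essentially the content of Theorem~12.13 in~\cite{Kallenberg:1997} --- is a joint oscillation control that combines the L\'{e}vy modulus of $B$ on $[0,t/\mu]$ with uniform bounds on the fluctuations of $Y$ across a suitably chosen dyadic partition of $[0,t]$.
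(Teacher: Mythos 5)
Your proposal is correct and rests, in the end, on the same reference the paper invokes: the paper proves this lemma only by citing Theorem~12.13 of Kallenberg, and your Strassen-plus-renewal-inversion sketch is precisely the argument that such a theorem packages, concluding with the same citation. One remark: the ``main obstacle'' you flag is milder than you suggest---Strassen's approximation $\sup_{0\le n\le N}\bigl|S_n-n\mu-\sigma B(n)\bigr|=o\bigl((N\log\log N)^{1/2}\bigr)$ a.s.\ is already uniform in $n$, so substituting $n=Y(s)$ for $s\in[0,t]$, together with $Y(t)/t\to 1/\mu$ a.s., a Borel--Cantelli bound $\max_{n\le Y(t)+1}\xi_n=o(t^{1/2})$ a.s.\ for the undershoot $s-S_{Y(s)}$, and a L\'evy-modulus estimate for $\sup_{0\le s\le t}|B(Y(s))-B(s/\mu)|$ on $[0,t/\mu]$, already yields the required uniform $o\bigl((t\log\log t)^{1/2}\bigr)$ error; a separate dyadic control of the fluctuations of $Y$ is not needed.
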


Now we lay down the ground for the subsequent proofs.
For $t\geq 0$, $k\geq 2$ and $r\in\mn$, let $Y_{k-1}^{(r)}(t)$ be the number of successors in the $k$th generation with birth times within $[S_r,\,S_r+t]$ of the first generation individual with birth time $S_r$. Then
\begin{equation}\label{eq:recur}
Y_k(t)=\sum_{r\geq 1} Y_{k-1}^{(r)}(t-S_r)\1_{\{S_r\le t\}}.
\end{equation}
By the branching property, $(Y_{k-1}^{(1)}(t))_{t\ge 0}, (Y_{k-1}^{(2)}(t))_{t\ge 0}, \ldots$ are independent copies of $(Y_{k-1}(t))_{t\geq 0}$ which are also independent of $S$.
Passing in \eqref{eq:recur} to expectations we obtain, for $k\geq 2$ and $t\geq 0$,
\begin{equation}\label{eq:recurV}
V_k(t)=\int_{[0,\,t]} V_{k-1}(t-y)\dd V(y)=\int_{[0,\,t]} V(t-y)\dd V_{k-1}(y).
\end{equation}
Thus, $V_k$ is the $k$-fold Lebesgue-Stieltjes convolution of $V$ with itself. 	

For fixed $d>0$, the distribution of a positive random variable is called $d$-lattice if it is concentrated on the lattice $(nd)_{n\in\mn_0}$ and not concentrated on $(nd_1)_{n\in\mn_0}$ for any $d_1>d$. The number $d$ is called span of the corresponding lattice distribution. The distribution of a positive random variable is called nonlattice if it is not $d$-lattice for any $d>0$. Lemma~\ref{lem:vk} collects some properties of $V_k=\me Y_k$.
\begin{lemma}\label{lem:vk}
Fix any $k\in\mn$.

\noindent (a) Assume that $\mu=\me\xi<\infty$. Then $$\lim_{t\to\infty}\frac{V_k(t)}{t^k}=\frac{1}{k!\mu^k}.$$

\noindent (b) Assume that $\mu=\me\xi<\infty$. Then $$\lim_{t\to\infty}\frac{V_k(t+h)-V_k(t)}{t^{k-1}}=\frac{h}{(k-1)!\mu^k}$$ for each $h>0$ if the distribution of $\xi$ is nonlattice and $h=id$, $i\in\mn$ if the distribution of $\xi$ is $d$-lattice.

\noindent (c) Assume that
$\me\xi^2<\infty$. Then $$-\infty<{\lim\inf}_{t\to\infty}\frac{V_k(t)-t^k/(k!\mu^k)}{t^{k-1}}\leq {\lim\sup}_{t\to\infty}\frac{V_k(t)-t^k/(k!\mu^k)}{t^{k-1}}<\infty.$$

\noindent (d) For all $x,h\geq 0$,
\begin{equation}\label{eq:subad}
V_k(x+h)-V_k(x)\leq (V(h)+1)(V(x+h))^{k-1}.
\end{equation}
\end{lemma}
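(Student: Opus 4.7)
The plan is to prove all four parts by induction on $k$, repeatedly using the convolution identity $V_k=V\ast V_{k-1}$ from \eqref{eq:recurV}; in each case the $k=1$ instance is a classical renewal-theoretic fact about $V$. It is natural to prove (d) first, so that the bounds it supplies are available to dominate the convolution integrals arising in (b) and (c).

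For (d), the base case is the classical inequality $V(x+h)-V(x)\le V(h)+1$, obtained by inspecting the first renewal after time $x$ and using the strong Markov property. For the inductive step I would split
\begin{equation*}
V_k(x+h)-V_k(x)=\int_{[0,x]}\bigl[V_{k-1}(x+h-y)-V_{k-1}(x-y)\bigr]\,\dd V(y)+\int_{(x,x+h]}V_{k-1}(x+h-y)\,\dd V(y),
\end{equation*}
bound the integrand of the first integral by $(V(h)+1)V(x+h)^{k-2}$ via the induction hypothesis (with $x-y$ in place of $x$), and bound the second via the induction hypothesis at $x=0$, which gives $V_{k-1}(h)\le (V(h)+1)V(h)^{k-2}$. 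Setting $u=V(x)$, $v=V(x+h)$, $w=V(h)$ (so $0\le u,w\le v$), the two bounds combine to $(V(h)+1)\bigl[v^{k-2}u+w^{k-2}(v-u)\bigr]$, and the elementary observation that $v^{k-2}u+w^{k-2}(v-u)\le v^{k-1}$ collapses this to $(V(h)+1)V(x+h)^{k-1}$, closing the induction.

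For (a) the cleanest route is via Laplace transforms: since $\dd V_k=(\dd V)^{*k}$, its Laplace--Stieltjes transform equals $(\phi(s)/(1-\phi(s)))^k$ with $\phi(s)=\me\eee^{-s\xi}=1-\mu s+o(s)$, so it is $\sim(\mu s)^{-k}$ as $s\downarrow 0$, and Karamata's Tauberian theorem yields $V_k(t)\sim t^k/(k!\mu^k)$. For (b), use the dual convolution $V_k(t)=\int_0^t V(t-y)\,\dd V_{k-1}(y)$ and split $V_k(t+h)-V_k(t)$ analogously to (d). The tail integral is $O(V_{k-1}(t+h)-V_{k-1}(t))=o(t^{k-1})$ by the induction hypothesis, while Blackwell's theorem gives $V(t+h-y)-V(t-y)\to h/\mu$ as $t-y\to\infty$; the base-case inequality $V(u+h)-V(u)\le V(h)+1$ supplies domination, and a standard key-renewal-style argument shows the main integral is asymptotic to $(h/\mu)V_{k-1}(t)\sim ht^{k-1}/((k-1)!\mu^k)$. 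Restricting $h$ to $\{id:i\in\mn\}$ handles the lattice case.

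For (c), the base case is the classical estimate $V(t)-t/\mu=O(1)$ under $\me\xi^2<\infty$ (Smith's theorem in the nonlattice case, Lorden's inequality in general). For the inductive step, write $V_{k-1}(s)=s^{k-1}/((k-1)!\mu^{k-1})+R_{k-1}(s)$ with $|R_{k-1}(s)|\le C(1+s)^{k-2}$, substitute into $V_k(t)=\int_0^t V_{k-1}(t-y)\,\dd V(y)$, and integrate the polynomial piece by parts: $\int_0^t(t-y)^{k-1}\,\dd V(y)=(k-1)\int_0^t(t-y)^{k-2}V(y)\,\dd y=t^k/(k\mu)+O(t^{k-1})$ (using $V(y)=y/\mu+O(1)$), which produces the main term $t^k/(k!\mu^k)$ with an $O(t^{k-1})$ error. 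A second integration by parts shows $\int_0^t R_{k-1}(t-y)\,\dd V(y)=O(t^{k-1})$ as well. The main obstacle across the lemma is the bookkeeping in (c), where the leading-order cancellation between $V_k(t)$ and $t^k/(k!\mu^k)$ must be extracted cleanly and the error must remain of order $t^{k-1}$ uniformly through the iterated convolution; (d)'s induction also hinges on the small algebraic identity just mentioned, which is what keeps the constant in front equal to $V(h)+1$ rather than growing with $k$.
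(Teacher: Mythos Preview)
Your proposal is correct. Parts (a), (b) and (d) follow essentially the same inductive/convolution route that the paper (through its cited references) takes; your algebra in (d) is a minor variant of the paper's, which bounds $V_{k-1}(h)\le V(h)^{k-1}\le (V(h)+1)V(x+h)^{k-2}$ directly and obtains the same telescoping sum $V(x)+ (V(x+h)-V(x))=V(x+h)$.

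Part (c), however, is handled differently. The paper splits into the nonlattice case, where it quotes an external reference giving the precise asymptotic $V_k(t)-t^k/(k!\mu^k)\sim c_k t^{k-1}$, and the lattice case, where it reduces to the integer grid via monotonicity and invokes a dedicated appendix lemma (Lemma~\ref{lattice}) that computes the exact constant along the lattice. Your argument is more elementary: using only Lorden's inequality and Wald's identity for the base case $V(t)-t/\mu=O(1)$, you run a single induction through the convolution identity, with no lattice/nonlattice distinction, and obtain the $O(t^{k-1})$ bound directly. This is shorter and entirely sufficient for the statement of the lemma (and for its only use later in the paper, justifying the interchange of centerings); what it gives up is the explicit leading constant, which the paper's route supplies but never actually uses.
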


\begin{proof}
(a) See, for instance, Theorem 1.16 on p.~38 in \cite{Mitov+Omey:2014}.

\noindent (b)
When the distribution of $\xi$ is nonlattice, this is a particular case ($\eta=\xi$) of Theorem~2.4 in \cite{Iksanov+Rashytov+Samoilenko:2023}. Assume now that, for some $d>0$, the distribution of $\xi$ is $d$-lattice. Then  $\lim_{t\to\infty}(V_1(t+h)-V_1(t))=\mu^{-1}h$ for $h=id$, $i\in\mn$ by Blackwell's theorem, see Theorem~1.10 in \cite{Mitov+Omey:2014}. With this at hand, the same proof by induction as in \cite{Iksanov+Rashytov+Samoilenko:2023} also works in the lattice case.

\noindent (c) Assume that the distribution of $\xi$ is nonlattice. Using Theorem 2.2 in \cite{Iksanov+Rashytov+Samoilenko:2023}, with $\eta=\xi$ in the notation of that paper, we conclude that, for each fixed $k\in\mn$,
$$V_k(t)-\frac{t^k}{k!\mu^k}~\sim~\frac{bk t^{k-1}}{(k-1)!\mu^{k-1}},\quad t\to\infty,$$ where $b=\me \xi^2/(2\mu^2)-1\in\mr$.

Assume now that, for some $d>0$, the distribution of $\xi$ is $d$-lattice. Then for each $t\geq 0$ there exists $n\in\mn_0$ such that $t\in[nd,(n+1)d)$. Hence, by monotonicity,
\begin{multline*}
	{\lim\sup}_{t\to\infty}\frac{V_k(t)-t^k/(k!\mu^k)}{t^{k-1}}\le {\lim\sup}_{n\to\infty}\frac{V_k((n+1)d)-(nd)^k/(k!\mu^k)}{(nd)^{k-1}} \\= {\lim\sup}_{n\to\infty}\frac{V_k((n+1)d)-V_k(nd)}{(nd)^{k-1}}+ {\lim\sup}_{n\to\infty}\frac{V_k(nd)-(nd)^k/(k!\mu^k)}{(nd)^{k-1}}.
\end{multline*}
The former limit superior (actually, the full limit) is finite according to Lemma \ref{lem:vk}(b), and the latter limit superior (the full limit) is finite according to Lemma \ref{lattice} with $\eta=\xi$. The finiteness of the lower limit follows analogously.

\noindent (d) We use mathematical induction in $k$. If $k=1$, then \eqref{eq:subad} expresses a known fact that the renewal function $V+1$ is subadditive, see, for instance, Theorem 1.7 on p.~10 in~\cite{Mitov+Omey:2014}. Assume that \eqref{eq:subad} holds for $k=j$ and note that, by monotonicity and \eqref{eq:recurV}, $V_j(h)\leq (V(h))^j \leq (V(h)+1) (V(x+h))^{j-1}$ for $x, h\geq 0$. We obtain, by another application of \eqref{eq:recurV},
\begin{multline*}
V_{j+1}(x+h)-V_{j+1}(x)=\int_{[0,\,x]}(V_j(x+h-y)-V_j(x-y)){\rm d}V(y)+\int_{(x,\,x+h]}V_j(x+h-y){\rm d}V(y)\\\leq (V(h)+1)\int_{[0,\,x]}(V(x+h-y))^{j-1}{\rm d}V(y)+V_j(h)(V(x+h)-V(x))\\\leq (V(h)+1) (V(x+h))^{j-1} V(x)+(V(h)+1) (V(x+h))^{j-1}(V(x+h)-V(x))\\=(V(h)+1) (V(x+h))^j.
\end{multline*}
\end{proof}

Here is another important ingredient for our proof of
Theorem \ref{main}.
\begin{lemma}\label{center:mom}
Fix any $k\in\mn$. Assume that ${\rm Var}\,\xi\in (0, \infty)$. Then
\begin{equation}\label{eq:4th}
\me \sup_{0\leq s\leq t} (Y_k(s)-V_k(s))^2=O(t^{2k-1}),\quad t\to\infty.
\end{equation}
\end{lemma}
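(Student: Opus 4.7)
Plan: I would prove the lemma by induction on $k$, leveraging the recursive structure in equations~\eqref{eq:recur} and~\eqref{eq:recurV}. The base case $k=1$, namely $\me \sup_{0\le s\le t}(Y(s)-V(s))^2 = O(t)$, is the classical second-moment bound for the deviation of a renewal process from its mean under $\var\xi<\infty$; it can be obtained via Wald's second identity applied to the stopping time $N(t)=Y(t)+1$, combined with a maximal inequality for the random-walk martingale $S_n - n\mu$ to pass from pointwise to uniform bounds in $s$.

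For the induction step, assume $\me\sup_{0\le s\le t}(Y_{k-1}(s)-V_{k-1}(s))^2=O(t^{2k-3})$. From \eqref{eq:recur} and \eqref{eq:recurV} I would decompose
\begin{align*}
Y_k(s)-V_k(s) &= A_k(s) + B_k(s), \\
A_k(s) &= \sum_{r\ge 1}\bigl(Y_{k-1}^{(r)}(s-S_r)-V_{k-1}(s-S_r)\bigr)\1_{\{S_r\le s\}}, \\
B_k(s) &= \sum_{r\ge 1}V_{k-1}(s-S_r)\1_{\{S_r\le s\}} - V_k(s).
\end{align*}
By $(a+b)^2\le 2a^2+2b^2$, it suffices to prove $\me\sup_{s\le t}A_k(s)^2=O(t^{2k-1})$ and $\me\sup_{s\le t}B_k(s)^2=O(t^{2k-1})$ separately.

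For $A_k$, I would observe that for every $s\le t$,
$$|A_k(s)|\le \sum_{r=1}^{Y(t)}\sup_{0\le u\le t}\bigl|Y_{k-1}^{(r)}(u)-V_{k-1}(u)\bigr|,$$
and apply Cauchy--Schwarz to get $\sup_{s\le t}A_k(s)^2\le Y(t)\sum_{r=1}^{Y(t)}G_r$, where $G_r:=\sup_{0\le u\le t}(Y_{k-1}^{(r)}(u)-V_{k-1}(u))^2$. The $G_r$ are i.i.d.\ and, by the branching property, independent of $S$. Conditioning on $Y(t)$ and applying Wald's identity then yields $\me\sup_{s\le t}A_k(s)^2\le \me Y(t)^2\cdot \me G_1=O(t^2)\cdot O(t^{2k-3})=O(t^{2k-1})$, where $\me Y(t)^2=O(t^2)$ follows from $\me Y(t)=V(t)=O(t)$ together with the base-case variance bound.

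For $B_k$, writing $V_{k-1}(s-y)=\int_{[0,s-y]}\dd V_{k-1}(u)$ and swapping the order of summation and integration gives the rewrite
$$B_k(s)=\int_{[0,s]}\bigl(Y(s-u)-V(s-u)\bigr)\dd V_{k-1}(u),$$
so that $|B_k(s)|\le V_{k-1}(s)\cdot\sup_{v\le t}|Y(v)-V(v)|$ for all $s\le t$. Combining the polynomial growth estimate $V_{k-1}(t)=O(t^{k-1})$ from Lemma~\ref{lem:vk}(a) with the base case $\me\sup_{v\le t}(Y(v)-V(v))^2=O(t)$ gives $\me\sup_{s\le t}B_k(s)^2=O(t^{2k-2})\cdot O(t)=O(t^{2k-1})$, closing the induction. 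The main delicate point is the passage to the supremum in $A_k$: the pointwise second moment satisfies $\me A_k(s)^2=O(s^{2k-2})$, one order smaller, but the uniform-in-$s$ version requires the slightly cruder Cauchy--Schwarz step above, which nevertheless produces the correct order $O(t^{2k-1})$ matching the variance asymptotics established in \cite{Iksanov+Marynych+Rashytov:2022}.
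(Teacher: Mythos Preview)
Your argument is correct. The decomposition $Y_k-V_k=A_k+B_k$ is exactly the split $I_k+J_k$ that the paper uses later in the proof of Theorem~\ref{main}, and your integration-by-parts rewrite of $B_k$ matches the paper's treatment of $J_k$. The induction step for $A_k$ via Cauchy--Schwarz and conditioning on $Y(t)$ is clean; the only terminological quibble is that the equality $\me\big[Y(t)\sum_{r=1}^{Y(t)}G_r\big]=\me Y(t)^2\cdot\me G_1$ is not Wald's identity but simply independence of $(G_r)_{r\ge1}$ from $S$ (and hence from $Y(t)$). Your base-case sketch is also fine once one recalls that $V(s)-s/\mu$ stays bounded under $\me\xi^2<\infty$, so the centering by $V$ is equivalent (up to $O(1)$) to centering by $s/\mu$.

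The paper itself does not give a self-contained proof of Lemma~\ref{center:mom}: it simply observes that the result is a special case (take $\eta=\xi$) of Lemmas~4.2(b) and~3.1(c) in \cite{Gnedin+Iksanov:2020}. Your induction supplies exactly the kind of direct argument that the citation encapsulates; the cited lemmas proceed along similar lines (an $A_k/B_k$-type split, a moment bound propagated through the recursion, and the input $V_{k-1}(t)=O(t^{k-1})$), so your proof is essentially an explicit in-paper version of what the reference provides. The benefit of your write-up is self-containment; the benefit of the paper's choice is brevity.
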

\begin{proof}
In the setting of iterated perturbed random walks a counterpart of \eqref{eq:4th} is a consequence of Lemmas 4.2(b) and 3.1(c) in \cite{Gnedin+Iksanov:2020} under the assumptions $\me\xi^2<\infty$ and $\me\eta<\infty$. Relation \eqref{eq:4th} itself follows on putting $\eta=\xi$.
\end{proof}

\section{Proof of Theorem \ref{main}}

The possibility of replacing $t\mapsto t^k/(k!\mu^k)$ with $V_k$ is justified by Lemma \ref{lem:vk}(c).

Since $Y_1$ is a renewal process, the case $k=1$ of Theorem \ref{main} was known, see Proposition~3.5 in \cite{Iksanov+Jedidi+Bouzzefour:2017}. Thus, in what follows it is tacitly assumed that $k\geq 2$.

Throughout the proof, for notational simplicity, we assume that if the distribution of $\xi$ is lattice, its lattice span is $1$. Using \eqref{eq:recur} we obtain a basic decomposition for the present proof: for $k\geq 2$ and $t\geq 0$:
\begin{multline*}
Y_k(t)-V_k(t)=\sum_{r\geq 1}\big(Y_{k-1}^{(r)}(t-S_r)-V_{k-1}(t-S_r)\big)\1_{\{S_r\leq t\}}+
\left( \sum_{r\geq 1}V_{k-1}(t-S_r)\1_{\{S_r\leq t\}}-V_k(t) \right)
\\=:I_k(t)+J_k(t).
\end{multline*}
We shall prove that
\begin{equation}\label{eq:jt}
C\bigg(\bigg(\frac{a_k J_k(t)}{(2t^{2k-1}\log\log t)^{1/2}}: t>\eee\bigg)\bigg)=[-1,1]\quad\text{{\rm a.s.}}
\end{equation}
and that
\begin{equation}\label{eq:it}
\lim_{t\to\infty}\frac{I_k(t)}{(t^{2k-1}\log\log t)^{1/2}}=0\quad\text{a.s.},
\end{equation}
that is, the term $J_k$ gives the principal contribution, whereas the contribution of $I_k$ is negligible.

First, we deal with \eqref{eq:jt}. Recalling \eqref{eq:recurV} write, with the help of integration by parts, for $k\geq 2$ and $t\geq 0$,
\begin{eqnarray*}
&&J_k(t)=\int_{[0,\,t]}V_{k-1}(t-x){\rm d}(Y(x)-V(x))=\int_{[0,\,t)}(Y(t-x)-V(t-x)){\rm
d}V_{k-1}(x)\\&=&\int_{[0,\,t)}(Y(t-x)-V(t-x)-\sigma\mu^{-3/2}W(t-x)){\rm
d}V_{k-1}(x)\\&+&\sigma\mu^{-3/2}\int_{[0,\,t)}W(t-x){\rm
d}V_{k-1}(x)\bigg)=: A_k(t)+\sigma\mu^{-3/2}B_k(t),
\end{eqnarray*}
where $W$ is a standard Brownian motion appearing in Lemma \ref{chs}. By Lemmas \ref{chs} and~\ref{lem:vk}(a),
\begin{multline*}
|A_k(t)|\leq\sup_{0\leq u\leq t}|Y(u)-V(u)-\sigma\mu^{-3/2}W(u)|V_{k-1}(t)\\=o\big((t^{2k-1}\log\log t)^{1/2}\big),\quad t\to\infty\quad\text{a.s.}
\end{multline*}
Further,
\begin{multline*}
B_k(t)=\frac{1}{(k-1)!\mu^{k-1}}\int_{(0,\,t]}(t-x)^{k-1}{\rm
d}W(x)+\int_{(0,\,t]}\Big(V_{k-1}(t-x)-\frac{(t-x)^{k-1}}{(k-1)!\mu^{k-1}}\Big){\rm
d}W(x)\\=:((k-1)!\mu^{k-1})^{-1}B_{1,k}(t)+B_{2,k}(t).
\end{multline*}
We intend to prove that $$\lim_{t\to\infty}\frac{B_{2,k}(t)}{t^{k-1/2}}=0\quad\text{a.s.}$$
To this end, it suffices to show that, for all $\varepsilon>0$,
\begin{equation}\label{aim1}
\sum_{n\geq 1}\mmp\{\sup_{t\in [n,\, n+1]}\,|B_{2,k}(t)|>\varepsilon n^{k-1/2}\}<\infty.
\end{equation}
Indeed, if this is true, then, by the Borel–Cantelli lemma, $$\sup_{t\in [n,\, n+1]}|B_{2,k}(t)|\leq\varepsilon n^{k-1/2}$$ for $n$ large enough a.s. Hence, for all large enough $n$ and $t\in [n,\, n+1]$, $$|B_{2,k}(t)|\leq \sup_{t\in [n,\,n+1]}|B_{2,k}(t)|\leq \varepsilon n^{k-1/2}\leq \varepsilon t^{k-1/2}\quad\text{a.s.}$$ Thus, ${\lim\sup}_{t\to \infty}|B_{2,k}(t)|/t^{k-1/2}\leq \varepsilon$ a.s.\ which entails the claim.

Let us prove~\eqref{aim1}. In what follows $C_1$, $C_2,\ldots$ will denote positive constants, whose values are of no importance. Put $f_k(t):=V_{k-1}(t)- ((k-1)!\mu^{k-1})^{-1} t^{k-1}$ for $k\geq 2$ and $t\geq 0$. Write
\begin{multline*}
\sup_{t\in [n,\,n+1]}|B_{2,k}(t)-B_{2,k}(n)|=\sup_{t\in [0,\,1]}\Big|\int_{(0,\,n+t]} f_k(n+t-x){\rm d}W(x)-\int_{(0,\,n]} f_k(n-x){\rm d}W(x)\Big|\\=\sup_{t\in [0,\,1]}\Big|\int_{(n,\,n+t]}f_k(n+t-x){\rm d}W(x)+\int_{(0,\,n]}(f_k(n+t-x)-f_k(n-x)){\rm d}W(x)\Big|\\ \leq \sup_{t\in [0,\,1]}\Big|\int_{(n,\,n+t]}f_k(n+t-x){\rm d}W(x)\Big|+\sup_{t\in [0,\,1]}\Big|\int_{(0,\,n]}(f_k(n+t-x)-f_k(n-x)){\rm d}W(x)\Big|.
\end{multline*}
Note that the variable $B_{2,k}(n)$ has a normal distribution with zero mean and variance $\int_0^n (f_k(x))^2{\rm d}x$. By Lemma \ref{lem:vk}(c), for large enough $n$, $\int_0^n (f_k(x))^2{\rm d}x\leq C_1 n^{2k-3}$. Hence, for all $\varepsilon>0$ and large $n$, $$\mmp\{|B_{2,k}(n)|>\varepsilon n^{k-1/2}\}\leq \Big(\frac{2}{\pi}\Big)^{1/2} \int_{\varepsilon C_1^{-1/2} n }^\infty \eee^{-x^2/2}{\rm d}x\leq \Big(\frac{2C_1}{\varepsilon^2 \pi}\Big)^{1/2}\frac{\eee^{-\varepsilon^2 n^2/(2C_1)}}{n}.$$ The right-hand side is the $n$th term of a summable sequence.

Observe that the process $B_{2,k}$ is a.s.\ continuous. Indeed, $$B_{2,k}(t)=\int_{[0,\,t)}W(t-x){\rm
d}V_{k-1}(x)-\frac{1}{(k-1)!\mu^{k-1}}\int_{[0,\,t)}W(t-x){\rm
d}x^{k-1},$$ and each of the summands is a.s.\ continuous as the Lebesgue-Stieltjes convolution of an a.s.\ continuous function and nondecreasing function. In view of the a.s.\ continuity, which entails the a.s.\ boundedness on $[0,1]$, we infer, for all $\varepsilon>0$,
\begin{equation}\label{part}
-\log \mmp\{\sup_{t\in [0,\,1]}\,|B_{2,k}(t)|>\varepsilon n^{k-1/2}\}~\sim~ \frac{\varepsilon^2 n^{2k-1}}{2\int_0^1 f_k^2(y){\rm d}y},\quad n\to\infty
\end{equation}
by a large deviation bound for a.s.\ bounded Gaussian processes, see formula (1.1) in \cite{Marcus+Shepp:1972}. Since the variable $\sup_{t\in [0,\,1]}\,\Big|\int_{(n,\,n+t]}f_k(n+t-x){\rm d}W(x)\Big|$ has the same distribution as $\sup_{t\in [0,\,1]}\,|B_{2,k}(t)|$ we conclude that the sequence $$n\mapsto \mmp\Big\{\sup_{t\in [0,\,1]}\,\Big|\int_{(n,\,n+t]}f_k(n+t-x){\rm d}W(x)\Big|>\varepsilon n^{k-1/2}\Big\}$$ is summable.

To proceed, we note that the variable $\sup_{t\in [0,\,1]}\Big|\int_{(0,\,n]}(f_k(n+t-x)-f_k(n-x)){\rm d}W(x)\Big|$ has the same distribution as $\sup_{t\in [0,\,1]}\Big|\int_{(0,\,n]}(f_k(x+t)-f_k(x)){\rm d}W(x)\Big|$.  Whenever a Skorokhod integral is well-defined, it coincides with the result of (formal) integration by parts. In particular,
\begin{multline*}
\int_{(0,\,n]}(f_k(x+t)-f_k(x)){\rm d}W(x)=(f_k(n+t)-f_k(n))W(n)-\int_{(0,\,n]}W(x){\rm d}_x(f_k(x+t)-f_k(x))\\=(f_k(n+t)-f_k(n))W(n)+\int_{(0,\,t]}W(x){\rm d}f_k(x)+\int_{(0,\,n-t]}(W(x+t)-W(x)){\rm d}f_k(x+t)\\-\int_{(n-t,\,n]}W(x){\rm d}f_k(x+t).
\end{multline*}
Hence, since the function $V_{k-1}$ is nondecreasing,
\begin{multline}
\sup_{t\in [0,\,1]}\,\Big|\int_{(0,\,n]}(f_k(x+t)-f_k(x)){\rm d}W(x)\Big|\leq (V_{k-1}(n+1)-V_{k-1}(n)) |W(n)|\\+((k-1)!\mu^{k-1})^{-1} ((n+1)^{k-1}-n^{k-1})|W(n)|+\sup_{t\in [0,\,1]}\Big|\int_{(0,\,t]}W(x){\rm d}V_{k-1}(x)\Big|\\+((k-1)!\mu^{k-1})^{-1}\sup_{t\in [0,\,1]}\Big|\int_{(0,\,t]}W(x){\rm d}x^{k-1}\Big|+\sup_{t\in [0,\,1]}\, \int_{(0,\,n-t]}|W(x+t)-W(x)|{\rm d}_xV_{k-1}(x+t)\\+((k-1)!\mu^{k-1})^{-1}\sup_{t\in [0,\,1]}\, \int_{(0,\,n-t]}|W(x+t)-W(x)|{\rm d}_x(x+t)^{k-1}\\+\sup_{t\in [0,\,1]}\, \int_{(n-t,\,n]}|W(x)|{\rm d}_x V_{k-1}(x+t)+((k-1)!\mu^{k-1})^{-1}\sup_{t\in [0,\,1]}\, \int_{(n-t,\,n]}|W(x)|{\rm d}_x (x+t)^{k-1}.\label{inter}
\end{multline}

We shall only treat the terms involving $V_{k-1}$, for the analysis of the terms involving $t\mapsto t^{k-1}$ is analogous but easier. We start with the penultimate term in \eqref{inter}. Observe that
\begin{multline*}
\sup_{t\in [0,\,1]}\, \int_{(n-t,\,n]}|W(x)|{\rm d}_x V_{k-1}(x+t)\leq \sup_{t\in [0,\,1]}\,(V_{k-1}(n+t)-V_{k-1}(n))\sup_{n-t\leq z\leq n}\,|W(z)|\\\leq (V_{k-1}(n+1)-V_{k-1}(n)) \sup_{n-1\leq z\leq n}\,|W(z)|.
\end{multline*}
By Lemma \ref{lem:vk}(b), for large $n$, $V_{k-1}(n+1)-V_{k-1}(n)\leq C_2 n^{k-2}$. The random variable $\sup_{z\in [n-1,\,n]}\,|W(z)|$ has the same distribution as $\sup_{z\in [0,1]}\,|W(z)+W^\prime(n-1)|$, where $W^\prime(n-1)$ is a copy of $W(n-1)$ which is independent of $\sup_{z\in [0,1]}\,|W(z)|$. Hence, for all $\varepsilon>0$ and large $n$,
\begin{multline*}
\mmp\{(V_{k-1}(n+1)-V_{k-1}(n))\sup_{z\in [n-1,\,n]}\,|W(z)|>\varepsilon n^{k-1/2}\}\\\leq \mmp\{(V_{k-1}(n+1)-V_{k-1}(n))\sup_{z\in [0,1]}\,|W(z)|>\varepsilon n^{k-1/2}/2\}\\+\mmp\{(V_{k-1}(n+1)-V_{k-1}(n))|W^\prime(n-1)|>\varepsilon n^{k-1/2}/2\}=:R_{n,1}+R_{n,2}.
\end{multline*}
Further, $$R_{n,2}\leq \Big(\frac{2}{\pi}\Big)^{1/2} \int_{2^{-1}C_2^{-1}\varepsilon n }^\infty \eee^{-x^2/2}{\rm d}x \\\leq \Big(\frac{8C_2^2}{\varepsilon^2 \pi}\Big)^{1/2}\frac{\eee^{-\varepsilon^2 n^2/(8C_2^2)}}{n}.$$ The right-hand side is the $n$th term of a summable sequence. Using the inequalities
\begin{equation}\label{ineq}
\mmp\{\sup_{t\in [0,\,1]}\,|W(t)|>x\}\leq 2\mmp\{\sup_{t\in [0,\,1]}\,W(t)>x\}=2\mmp\{|W(1)|>x\},\quad x>0
\end{equation}
we also conclude that the sequence $(R_{n,1})_{n\in\mn}$ is summable. Thus, for all $\varepsilon>0$, the sequence $$n\mapsto \mmp\{(V_{k-1}(n+1)-V_{k-1}(n))\sup_{z\in [n-1,\,n]}\,|W(z)|>\varepsilon n^{k-1/2}\}$$ is summable, and so is $$n\mapsto \mmp\{(V_{k-1}(n+1)-V_{k-1}(n))|W(n)|>\varepsilon n^{k-1/2}\}$$ because $|W(n)|\leq \sup_{z\in [n-1,\,n]}\,|W(z)|$. For all $\varepsilon>0$, the sequence $$n\mapsto \mmp\Big\{\sup_{t\in [0,\,1]}\Big|\int_{(0,\,t]}W(x){\rm d}V_{k-1}(x)\Big|>\varepsilon n^{k-1/2}\Big\}$$ is summable in view of the bound $$\sup_{t\in [0,\,1]}\Big|\int_{(0,\,t]}W(x){\rm d}V_{k-1}(x)\Big|\leq \int_{(0,\,1]}|W(x)|{\rm d}V_{k-1}(x)\leq \sup_{t\in [0,\,1]}\,|W(t)|V_{k-1}(1)$$ and \eqref{ineq}.

Finally, $$\sup_{t\in [0,\,1]}\, \int_{(0,\,n-t]}|W(x+t)-W(x)|{\rm d}_xV_{k-1}(x+t)\leq V_{k-1}(n) \sup_{t\in [0,\,1]}\,\sup_{z\in [0,\,n]}|W(z+t)-W(z)|.$$ By Lemma \ref{lem:vk}(a), $V_{k-1}(n)\leq C_3 n^{k-1}$ for large $n$. By Lemma 1.2.1 on p.~29 in \cite{Csorgo+Revesz:1981}, given $\delta>0$ there exists $C=C(\delta)>0$ such that, for all $\varepsilon>0$ and $n\geq 2$, $$\mmp\{V_{k-1}(n)\sup_{t\in [0,\,1]}\,\sup_{z\in [0,\,n]}|W(z+t)-W(z)| >\varepsilon
n^{k-1/2}\}\leq C(n+1)\exp\Big(-\frac{\varepsilon^2 n}{C_3^2(2+\delta)}\Big).$$ This proves that the sequence $$n\mapsto \mmp\Big\{\sup_{t\in [0,\,1]}\, \int_{(0,\,n-t]}|W(x+t)-W(x)|{\rm d}_xV_{k-1}(x+t)>\varepsilon n^{k-1/2}\Big\}$$ is summable. Combining fragments together we arrive at \eqref{aim1}.

We are now in position to prove~\eqref{eq:jt}. By Theorem 1 in \cite{Lachal:1997}, $${\lim\sup}_{t\to\infty}\frac{(2k-1)^{1/2}B_{1,k}(t)}{(2t^{2k-1}\log\log t)^{1/2}}=1\quad\text{a.s.}$$ Since $-W$ is also a Brownian motion we infer $${\lim\inf}_{t\to\infty}\frac{(2k-1)^{1/2}B_{1,k}(t)}{(2t^{2k-1}\log\log t)^{1/2}}=-1\quad\text{a.s.}$$ and thereupon $$C\bigg(\bigg(\frac{(2k-1)^{1/2}B_{1,k}(t)}{(2t^{2k-1}\log\log t)^{1/2}}: t>\eee\bigg)\bigg)=[-1,1]\quad\text{{\rm a.s.}}$$ because the random function $t\mapsto B_{1,k}(t) t^{1/2-k}(\log\log t)^{-1/2}$ is a.s.\ continuous on $(\eee,\infty)$. This completes the proof of \eqref{eq:jt}.

Now we pass to a proof of \eqref{eq:it}. Recall that $k\geq 2$.
Invoking Lemmas \ref{lem:vk}(a) and \ref{center:mom} yields
\begin{multline}
\me (I_k(t))^2=
\int_{[0,\,t]} \me\big(Y_{k-1}(t-x)-V_{k-1}(t-x)\big)^2 \dd V(x)\\
\le \me \big(\sup_{s\in [0,\,t]} (Y_{k-1}(s)-V_{k-1}(s))^2\big) \cdot V(t) = O(t^{2k-2}), \quad t\to\infty. \label{eq:mom}
\end{multline}
By Markov's inequality and \eqref{eq:mom}, for all $\varepsilon>0$,
$$
\sum_{n\geq 1} \mmp\Big\{ \frac{I_k(n^{3/2})}{n^{(3/2)(k-1/2)}}>\varepsilon \Big\} \le \sum_{n\geq 1} \frac{\me (I_k(n^{3/2}))^2}{\varepsilon^2 n^{3(k-1/2)}}
< \infty.
$$
Hence,
\begin{equation}\label{eq:xkn}
	\lim_{n\to\infty} \frac{I_k(n^{3/2})}{n^{(3/2)(k-1/2)}} = 0 \quad \text{a.s.}
\end{equation}
It remains to pass from an integer argument to a continuous argument. For any $t\geq 0$, there exists $n\in\mn_0$ such that
$t\in [n^{3/2}, (n+1)^{3/2})$. By monotonicity,
\begin{multline*}
 \frac{I_k(t)}{t^{k-1/2}}\le \frac{I_k((n+1)^{3/2})}{n^{(3/2)(k-1/2)}}\\ + \frac{\int_{[0,\,(n+1)^{3/2}]}V_{k-1}((n+1)^{3/2}-x){\rm d}Y(x)-\int_{[0,\,n^{3/2}]}V_{k-1}(n^{3/2}-x){\rm d}Y(x)}{n^{(3/2)(k-1/2)}}.
\end{multline*}
Relation \eqref{eq:xkn} implies that the first summand on the right-hand side converges to $0$ a.s.\ as $n\to\infty$. The second summand is equal to
\begin{multline*}
\int_{(n^{3/2},\,(n+1)^{3/2}]}V_{k-1}((n+1)^{3/2}-x){\rm d}Y(x)+\int_{[0,\,n^{3/2}]}(V_{k-1}((n+1)^{3/2}-x)-V_{k-1}(n^{3/2}-x)){\rm d}Y(x)\\=:X_{k,1}(n)+X_{k,2}(n).
\end{multline*}
By monotonicity,
\begin{multline*}
X_{k,1}(n)\le V_{k-1}((n+1)^{3/2}-n^{3/2})\big(Y((n+1)^{3/2})-Y(n^{3/2})\big)\\=o(n^{k/2+1})=o(n^{(3/2)(k-1/2)}),\quad n\to\infty\quad\text{a.s.}
\end{multline*}
Here, the penultimate equality is justified by the strong law of large numbers for renewal processes $\lim_{n\to\infty}n^{-1}Y(n)=\mu^{-1}$ a.s.\ and $V_{k-1}((n+1)^{3/2}-n^{3/2})=O(n^{(k-1)/2})$ as $n\to\infty$ which holds true by Lemma \ref{lem:vk}(a).

Using Lemma \ref{lem:vk}(d) we infer
\begin{multline*}
X_{k,2}(n)\leq \big(V((n+1)^{3/2}-n^{3/2})+1\big)(V((n+1)^{3/2}))^{k-2}Y(n^{3/2})=O(n^{(3/2)(k-2/3)})\\=o(n^{(3/2)(k-1/2)}),\quad n\to\infty\quad\text{a.s.}
\end{multline*}
The penultimate equality is secured by Lemma \ref{lem:vk}(a) and the strong law of large numbers for renewal processes.

We have shown that $$\lim\sup_{t\to\infty}t^{-(k-1/2)}I_k(t)\leq 0\quad\text{a.s.}$$ An analogous argument proves the converse inequality for the lower limit.
The proof of Theorem \ref{main} is complete.

\section{Appendix}

Lemma \ref{lattice} is a lattice analogue of Theorem 2.2 in \cite{Iksanov+Rashytov+Samoilenko:2023} dealing with iterated perturbed random walks. In the proof of Lemma \ref{lem:vk}(c) we only need a version of Lemma \ref{lattice} for iterated standard random walks.
\begin{lemma}\label{lattice}
Let $d>0$. Assume that the distributions of $\xi$ and $\eta$ are $d$-lattice and that $\me \xi^2<\infty$ and $\me \eta <\infty$. Then, for each fixed $k\in\mn$,
\begin{equation}\label{eq:lattice}
V^\ast_k(nd)-\frac{(nd)^k}{k!\mu^k}~\sim~ \frac{(nd)^{k-1}\Big(\frac{d}{2\mu}\big(2k-1\big)+k\big(\frac{\me \xi^2}{2\mu^2}-\frac{\me \eta}{\mu}\big)\Big)}{\mu^{k-1}(k-1)!},\quad n\to\infty,
\end{equation}
where $\mu=\me\xi<\infty$ and $V^\ast_k$ is a counterpart of $V_k$ for iterated perturbed random walks.
\end{lemma}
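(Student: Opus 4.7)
The plan is to induct on $k$, following the approach of the nonlattice proof (Theorem~2.2 in \cite{Iksanov+Rashytov+Samoilenko:2023}) but substituting the $d$-lattice counterparts of the renewal-theoretic inputs throughout.

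For the base case $k=1$, I would write $V^\ast_1(t) = \me U(t-\eta)$, where $U(t) := \sum_{n\geq 0}\mmp\{S_n\leq t\}$ is the renewal function of the unperturbed walk with $S_0 = 0$, and invoke the classical second-order lattice renewal expansion: under $\me\xi^2<\infty$,
$$U(nd) = \frac{nd}{\mu} + \frac{\me\xi^2}{2\mu^2} + \frac{d}{2\mu} + o(1),\quad n\to\infty,$$
which follows from a local expansion of $\sum_{n\ge 0}U(nd)z^n = 1/[(1-z)(1-\phi(z))]$ near $z=1$ (where $\phi(z) = \me z^{\xi/d}$). Since $\eta$ is $d$-lattice, $nd-\eta$ remains on the lattice, so averaging over $\eta$ by dominated convergence (using $\me\eta<\infty$) yields the $k=1$ case of \eqref{eq:lattice}.

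For the induction step, I would use the convolution identity $V^\ast_k = V^\ast_1 \ast V^\ast_{k-1}$, which in the lattice case reads $V^\ast_k(nd) = \sum_{j=0}^n V^\ast_{k-1}((n-j)d)\,u^\ast_1(jd)$ with $u^\ast_1(jd) := V^\ast_1(jd) - V^\ast_1((j-1)d)$ for $j\ge 1$ and $u^\ast_1(0) := V^\ast_1(0)$. Then split
$$V^\ast_k(nd) - \frac{(nd)^k}{k!\mu^k} = S_1(n) + S_2(n),$$
where $S_1(n) := \sum_j\bigl(V^\ast_{k-1}((n-j)d) - ((n-j)d)^{k-1}/[(k-1)!\mu^{k-1}]\bigr)u^\ast_1(jd)$ and $S_2(n)$ is the corresponding remainder. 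Apply the induction hypothesis to $S_1$ and the base-case expansion of $V^\ast_1$ to $S_2$, each combined with Abel summation (discrete integration by parts). This converts $S_1$ into a contribution of order $c_{k-1}(nd)^{k-1}/[(k-1)\mu]$ (where $c_{k-1}$ denotes the constant from step $k-1$) and $S_2$ into an additional $(nd)^{k-1}$-order contribution. Matching the coefficients of $(nd)^{k-1}$ gives a recursion for the constants in \eqref{eq:lattice} whose solution is the stated closed form.

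The main technical obstacle is the accurate tracking of the lattice boundary corrections in the Abel summation. Each passage between the value $V^\ast_1(jd)$ at a lattice point and its cell-averaged version on $[jd,(j+1)d)$ introduces a $d/(2\mu)$ correction that has no analogue in the nonlattice setting; these corrections accumulate across the induction and, together with the $k\,\beta$ term inherited from the nonlattice recursion, produce the factor $(2k-1)$ appearing in the leading constant of \eqref{eq:lattice}. Handling these boundary/bulk discrepancies carefully at each step is exactly what converts the nonlattice argument into its lattice counterpart.
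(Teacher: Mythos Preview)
Your proposal is correct and follows essentially the same route as the paper: induction on $k$, the base case via $V^\ast_1(t)=\me U(t-\eta)$ together with the second-order lattice renewal expansion $U(nd)=nd/\mu+\me\xi^2/(2\mu^2)+d/(2\mu)+o(1)$, and the induction step via the convolution $V^\ast_k=V^\ast_1\ast V^\ast_{k-1}$ split into a centered piece plus a polynomial remainder. The only cosmetic difference is that the paper centers $V^\ast_1$ and sums against the increments of $V^\ast_j$ (giving three terms $A_j$, $B_j$, and a Faulhaber sum), whereas you center $V^\ast_{k-1}$ and sum against the increments of $V^\ast_1$; these are related by the Abel summation you invoke, and both lead to the same recursion $C_{k}=C_{k-1}+C_1+d/(2\mu)$ for the leading constants.
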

\begin{proof}
We use mathematical induction in $k$. Let $k=1$.
Put $U(t):=V(t)+1$ for $t\geq 0$, so that $U$ is the renewal function.
Since $V_1^\ast(t)=\sum_{j\geq 1}\mmp\{S_{j-1}+\eta_j\leq t\}=:V^\ast(t)$ for $t\geq 0$, we infer, for $n\in\mn$,
\begin{multline*}
V^\ast(nd)-\frac{nd}{\mu}=\int_{[0,\,nd]}\Big(U(nd-x)-\frac{nd-x}{\mu}\Big){\rm d}\mmp\{\eta\leq x\}-\frac{1}{\mu}\int_0^{nd}\mmp\{\eta>x\}{\rm d}x\\= \sum_{r=1}^n \Big(U((n-r)d)-\frac{(n-r)d}{\mu}\Big)\mmp\{\eta=rd\}-\frac{1}{\mu}\int_0^{nd}\mmp\{\eta>x\}{\rm d}x.
\end{multline*}
According to formula (5.14) on p.~59 in \cite{Gut:2009}, $$\lim_{n\to\infty} \Big(U(nd)-\frac{nd}{\mu}\Big)= \frac{d}{2\mu}+\frac{\me \xi^2}{2\mu^2}=:D.$$
Hence, given $\varepsilon>0$ there exists $n_0\in\mn$ such that $U(nd)-\mu^{-1}nd \le D+\varepsilon$ for all $n\ge n_0$. With this at hand, for $n\geq n_0$,
\begin{multline*}
		\sum_{r=1}^n \Big(U((n-r)d)-\frac{(n-r)d}{\mu}\Big)\mmp\{\eta=rd\}
=\sum_{r=1}^{n-n_0}\ldots+\sum_{r=n-n_0+1}^n\ldots\\\le D+\varepsilon
+\sup_{1\le r\le n_0-1} \Big(U(rd)-\frac{rd}{\mu}\Big)\mmp\{\eta\geq (n-n_0+1)d\}.
	\end{multline*}
Letting $n\to\infty$ and then $\varepsilon\to 0+$ we conclude that $${\lim\sup}_{n\to\infty}\sum_{r=1}^n \Big(U((n-r)d)-\frac{(n-r)d}{\mu}\Big)\mmp\{\eta=rd\}\leq D.$$ The converse inequality for the lower limit follows analogously. Noting that $\lim_{n\to\infty}\int_0^{nd}\mmp\{\eta>x\}{\rm d}x=\me \eta$ completes the proof of \eqref{eq:lattice} with $k=1$.
	
Assume now that \eqref{eq:lattice} holds for $k\leq j$. In particular, given $\varepsilon>0$ there exists $n_0\in\mn$ such that $$V_k^\ast(nd)-\frac{(nd)^k}{k!\mu^k}\le (C_k+\varepsilon)\frac{(nd)^{k-1}}{(k-1)!\mu^{k-1}},\quad 1\leq k\leq j,$$ where $$C_k:=\frac{d}{2\mu}\big(2k-1\big)+k\Big(\frac{\me \xi^2}{2\mu^2}-\frac{\me \eta}{\mu}\Big),\quad k\in\mn.$$ Recalling \eqref{eq:recurV} we obtain
\begin{multline*}
V^\ast_{j+1}(nd)-\frac{(nd)^{j+1}}{(j+1)!\mu^{j+1}}=\sum_{r=1}^n \Big(V^\ast((n-r)d)-\frac{(n-r)d}{\mu}\Big)\Big(V^\ast_j(rd)-V^\ast_j((r-1)d)\Big)\\+\frac{d}{\mu}\sum_{r=1}^{n-1}
\Big(V_j^\ast(rd)- \frac{(rd)^j}{j!\mu^j}\Big)+\frac{d^{j+1}}{j!\mu^{j+1}}\Big(\sum_{r=1}^{n-1}r^j-\frac{n^{j+1}}{j+1}\Big).
	\end{multline*}
Hence, for $n\geq n_0$,
\begin{multline*}
	A_j(n):=\sum_{r=1}^n \Big(V^\ast((n-r)d)-\frac{(n-r)d}{\mu}\Big)\Big(V^\ast_j(rd)-V^\ast_j((r-1)d)\Big)=\sum_{r=1}^{n-n_0}\ldots+\sum_{r=n-n_0+1}^n\ldots\\\le (C_1+\varepsilon) V^\ast_j((n-n_0)d)+\sup_{1\le r\le n_0-1} \Big(V^\ast(rd)-\frac{rd}{\mu}\Big)\big(V^\ast_j(nd)-V^\ast_j((n-n_0)d)\big).
\end{multline*}
Invoking parts (a) and (b) of Lemma \ref{lem:vk} and letting $n\to\infty$ and then $\varepsilon\to 0+$ yields
$$
\limsup_{n\to\infty} \frac{A_j(n)}{(nd)^j}\le \frac{C_1}{j!\mu^j}.
$$
Further,
\begin{multline*}
B_j(n):=\sum_{r=1}^{n-1}\Big(V_j^\ast(rd)- \frac{(rd)^j}{j!\mu^j}\Big)=\sum_{r=1}^{n_0-1}\ldots+\sum_{r=n_0}^n\ldots \le n_0 \sup_{1\le r\le n_0-1} \Big(V^\ast_j(rd)-\frac{(rd)^j}{j!\mu^j}\Big)\\+(C_j+\varepsilon)\sum_{r=n_0}^n \frac{(rd)^{j-1}}{(j-1)!\mu^{j-1}}=(C_j+\varepsilon)\frac{d^{j-1}}{(j-1)!\mu^{j-1}}\frac{n^j}{j}+o(n^j),\quad n\to\infty
\end{multline*}
having utilized Faulhaber's formula for the last equality. Thus, $${\lim\sup}_{n\to\infty} \frac{d}{\mu}\frac{B_j(n)}{(nd)^j}\leq \frac{C_j}{j!\mu^j}.$$ Analogous arguments prove the converse inequalities for the lower limits involving both $A_j(n)$ and $B_j(n)$.
Finally, $$\frac{d^{j+1}}{j!\mu^{j+1}}\Big(\sum_{r=1}^{n-1}r^j-\frac{n^{j+1}}{j+1}\Big)~\sim~ \frac{d}{2\mu}\frac{(nd)^j}{j!\mu^j},\quad n\to\infty$$ by another application of Faulhaber's formula.

Combining all the fragments together we conclude that
$$
V^\ast_{j+1}(nd)-\frac{(nd)^{j+1}}{(j+1)!\mu^{j+1}}~\sim~\Big(C_1+C_j+\frac{d}{2\mu}\Big)\frac{(nd)^j}{j!\mu^j}=C_{j+1}\frac{(nd)^j}{j!\mu^j},\quad n\to\infty.
$$
\end{proof}

\end{document}